\documentclass[11pt, oneside]{amsart}
\pdfoutput=1
\usepackage[utf8]{inputenc}
\usepackage[T1]{fontenc}
\usepackage[pdftex, paperwidth=210mm, paperheight=11in]{geometry}
\geometry{top=28mm, bottom=28mm, left=30mm, right=30mm, heightrounded}

\usepackage[unicode,hidelinks]{hyperref}
\usepackage{xcolor}
\hypersetup{
    colorlinks,
    linkcolor={red!20!black},
    citecolor={blue!30!black},
    urlcolor={blue!40!black}
    }

%%% FONTS
\usepackage{mathptmx}
\usepackage{newtxmath}
\usepackage{mathbbol}%      A better version of "\mathbb"
\usepackage{microtype}

\usepackage{enumitem}
\usepackage{hyphenat}
	\hyphenation{Kechris Louveau Harrington Polish formalizable } % to prevent hyphenation
\usepackage{xspace}
	\xspaceaddexceptions{\, \: \nb \nobreakdash \text \textup } % exceptions for xspace

%%% Defintions, Theorems ...
\makeatletter

\swapnumbers

\theoremstyle{plain}
    \newtheorem{theorem}{Theorem}[section]
    \newtheorem{lemma}[theorem]{Lemma}
    \newtheorem{proposition}[theorem]{Proposition}

\theoremstyle{definition}
    \newtheorem{definition}[theorem]{Definition}
    \newtheorem*{theorem*}{Theorem}
    \newtheorem*{notation*}{Notation}

\theoremstyle{remark}
    
    \newtheorem*{remark*}{Remark}
    
%%% Layout controls
\usepackage{needspace}

%%% RLS %%%
% Complement: \sim as unary oerator ~X
\newcommand{\comp}{{\sim}}
% Function restriction
\newcommand{\restr}{{\restriction}}

% For lists and items
\setlist[enumerate]{leftmargin=20pt, topsep=2pt}
\setlist[itemize]{leftmargin=20pt}
% Enumeration style "upshape"
\setenumerate[1]{label=\upshape{(\arabic*)}}

% Typography
\newcommand{\nb}{\nobreakdash}
%\thickmuskip = 5mu plus 3mu minus 2mu

% SYMBOLS
% Empty set, using \O. Originally: LATIN CAPITAL LETTER O WITH STROKE: Ø
\let\@O\O
\renewcommand{\O}{\ifmmode\varnothing\else\@O\fi}

\newcommand{\subs}{\subseteq}
\newcommand{\sups}{\supseteq}

\newcommand{\X}{\times}

\newcommand{\MedBigcup}{\textstyle{\bigcup}}
\newcommand{\MedBigcap}{\textstyle{\bigcap}}

% COMMANDS
\newcommand{\WO}[1]{\ensuremath{\mathrm{WO}_{\smash{#1}}}\xspace}
\newcommand{\LO}{\ensuremath{\mathrm{LO}}\xspace}
\newcommand{\wCK}{{\omega_{1}^{\scriptscriptstyle\mathsf{CK}}}}

% Set of '#1' such that '#2'
\newcommand{\Set}[2]{\ensuremath{\{\mkern1mu{#1}\mid{#2}\mkern1mu\}}\xspace}
% Angles
\newcommand{\Angles}[1]{\ensuremath{\langle{#1}\rangle}\xspace}

% Logical symbols, with spacing
% 						A smaller logical equivalence
\newcommand{\Equiv}{\;\Leftrightarrow\;}% 					Logical equivalence
\newcommand{\LEquiv}{\;\Leftarrow\mkern-12mu\Rightarrow\;}%	Shorter than \iff
\newcommand{\Implies}{\;\Rightarrow\;}% Implication

% Common classes
\newcommand{\SigmaOne}{\ensuremath{\Sigma_{1}^{1}}\xspace}
\newcommand{\PiOne}{\ensuremath{\Pi_{1}^{1}}\xspace}
\newcommand{\DeltaOne}{\ensuremath{\Delta_{1}^{1}}\xspace}

% Mathcal: \cX is \mathcal{X} ...
\newcommand{\cB}{\ensuremath{\mathcal{B}}\xspace}
\newcommand{\cF}{\ensuremath{\mathcal{F}}\xspace}
\newcommand{\cG}{\ensuremath{\mathcal{G}}\xspace}
\newcommand{\cI}{\ensuremath{\mathcal{I}}\xspace}
\newcommand{\cN}{\ensuremath{\mathcal{N}}\xspace}
\newcommand{\cO}{\ensuremath{\mathcal{O}}\xspace}
\newcommand{\cP}{\ensuremath{\mathcal{P}}\xspace}
\newcommand{\cS}{\ensuremath{\mathcal{S}}\xspace}
\newcommand{\cT}{\ensuremath{\mathcal{T}}\xspace}
\newcommand{\cX}{\ensuremath{\mathcal{X}}\xspace}

%%% SPECIFIC TO THIS PAPER
% Topology names
% Delta topology
\newcommand{\Dtopology}{$\Delta$\nobreakdash-topol\-o\-gy\xspace}
% SigmaDelta topology
\newcommand{\SDtopology}{${\Sigma\Delta}$\nobreakdash-topol\-o\-gy\xspace}
% Gandy-Harrington tology
\newcommand{\GHtopology}{\ensuremath{\mathcal{S}_{\mathrm{GH}}}}

% \cS_\Delta, for the Delta topology
\newcommand{\Sdelta}{\ensuremath{\mathcal{S}_\Delta}\xspace}

% \cS_\Delta closure
\newcommand{\thinoverline}[1]{\fontdimen8\textfont3=.4pt\overline{#1}}
\newcommand{\SdeltaCl}[1]{\ensuremath{\thinoverline{#1}{\strut^\Delta}}}

\renewcommand{\le}{\leqslant}

\makeatother

%%%%%%%%%%%%%%%%%%%%%%%%%%%%%%%%%%%%

\begin{document}
\title[A note on an effective Polish topology]
						{A note on an effective Polish topology \\and Silver's Dichotomy theorem}
\author
						{Ramez L. Sami}
\address
						{Department of Mathematics. Université Paris\nobreakdash-Diderot. 75205 Paris Cedex 13, France.}
\email
						{sami@univ-paris-diderot.fr}
\subjclass[2010]
						{Primary: 03E15; Secondary: 28A05, 54H05}

\begin{abstract}
\ \par
    \begin{itemize}[label={$\scriptstyle\bullet$}, leftmargin=9pt, topsep=0pt, itemsep=0pt]
    
    \item We define a Polish topology inspired from the Gandy-Harrington topology and show how it can be used to prove Silver's dichotomy theorem while remaining in the Polish realm.
    
    \item In this topology, a $\Pi^1_1$ equivalence relation decomposes into a ``sum'' of a clopen relation and a meager one.
    
    \item We characterize it as the largest regular toplogy with a basis included in $\Sigma^1_1$.
\end{itemize}

\end{abstract}

%%%%%%%%%%%%%%%%
\vspace*{-4\baselineskip}
\begin{flushright}
    	[\textit{To appear in}: Proc. Amer. Math. Soc. \textbf{147} (2019),\,no.\,9]
	\medskip{}
\end{flushright}

\pdfbookmark[1]{A note on an effective Polish topology and~Silver's Dichotomy theorem}{}
\maketitle
%%%%%%%%%%%%%%%%

Jack Silver's remarkable dichotomy theorem is the statement: \emph{A $\boldsymbol{\Pi}^{1}_{1}$ equivalence relation on a Polish space either has countably many classes, or a perfect set of mutually inequivalent elements}. 

Silver's proof \cite{Silver} mobilized $\aleph_{1}$ distinct cardinals, and forced over a model of set theory. \nohyphens{Mercifully}, some years later, Leo Harrington produced a much milder forcing proof \cite{Harrington}, formalizable in analysis. Alain Louveau evolved this into a purely topological proof \cite{Louveau}, bringing forth the Gandy-Harrington topology: $\GHtopology =$ \emph{the topology generated by the \textup(lightface\textup) \SigmaOne subsets of the Baire space \cN}.

While $(\cN, \GHtopology)$ is a Baire space in a strong sense,\footnote{\,It was shown by Louveau to be a \emph{strong Choquet space}, see \cite[§4.2]{HKL}.} it is not Polish, disallowing the use of familiar ``Polish space arguments''. See \cite[§9]{Kechris-Martin}, \cite[§5.3]{Gao}, or \cite[§8]{Mansfield-Weitkamp} for detailed expositions.
\vspace{2pt}

The purpose of this brief note is to introduce the \SDtopology, inspired from \GHtopology, which is Polish, and provides a streamlined proof of Silver's theorem — still, the main ideas go back to Harrington and Louveau. This topology turns out to be a natural object, being characterizable as the largest Polish (or just regular) \emph{effective topology} on \cN — meaning, with a basis included in \SigmaOne.\:\:

In the same spirit, we believe that the proofs of several important results built upon the Gandy-Harrington topology (such as, \emph{inter alia}, the main results of Harrington, Kechris \& Louveau \cite{HKL}) can be simplified and streamlined in the more natural context of the \SDtopology.%
	\footnote{\,The present note greatly antedates the more recent ``back-to-classical'' movement developed \emph{con maestria} in Ben~Miller's~\cite{Miller}. We still hold that effective methods will often yield simpler proofs of stronger and finer results.}
\smallskip{}

In §\ref{sec:The-topology} we define the topology, prove it is Polish, and develop its basic properties. We apply these to give in §\ref{sec:Pi-equivalence-relations} the proof of Silver's theorem, showing on the way that a \PiOne equivalence decomposes into a ``sum'' of a clopen relation and a meager one. In §\ref{sec:Naturality} the characterization theorem is proved.

\section{\texorpdfstring{The \SDtopology}{The Σ∆-topology}}
\label{sec:The-topology}

\cN shall denote the standard Baire space $\omega^\omega = \mathbb{N}^{\mathbb{N}}$ (the set of \emph{reals}). Throughout, \SigmaOne, \PiOne, and \DeltaOne denote the \emph{lightface} classes of subsets of \cN, or $\cN^{2}$. We refer to Moschovakis \cite{Moschovakis} for effective descriptive set theory: all results in this note translate readily to recursively presented Polish spaces.

\begin{definition}
\ \par
\begin{enumerate}
	\item The \Dtopology is the topology with basis the \DeltaOne subsets of \cN, denoted \Sdelta.
	\item The \SDtopology is the topology with basis the \SigmaOne subsets of \cN that are \Sdelta\nb-closed.\\
	Observe that a set $A \subs \cN$ is \Sdelta\nb-closed just in case $A = \bigcap \Set{D \in \DeltaOne}{D \sups A}$.
\end{enumerate}
\end{definition}

The \SDtopology is the finer one, and its basis, as defined above, consists of clopen sets — both topologies are \emph{zero-dimensional}. Dually, a \PiOne set which is a union of \DeltaOne sets is also clopen for the \SDtopology. Observe that \DeltaOne functions $\cN \to \cN$ are continuous for either one.
\smallskip

We are only interested here in the \SDtopology. The \Dtopology serves only as a prop, though it may be of use in other contexts. We first show that both topologies are Polish — indeed, under adequate closure conditions, a topology generated by a countable collection of Borel sets, over a base Polish topology, is Polish. For ${\cX \subs \cP(U)}$, let $\Angles{\cX}_U$, or just \Angles{\cX}, denote the topology generated by \cX. The following proposition is quite familiar.

\begin{lemma}
\ \par
\begin{enumerate}
	\item Let $\cT_{i}$, $i < \omega$, be Polish topologies on a set $U$ such that $\bigcap_{i < \omega} \cT_{i}$ is Hausdorff. The topology \Angles{\bigcup_{i < \omega} \cT_{i}} is Polish.
	\item Let \cF be a countable collection of closed sets in a Polish space $(U, \cT)$. \Angles{\cT \cup \cF} is a Polish topology.
\end{enumerate}
\end{lemma}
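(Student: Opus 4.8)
The plan is to prove (1) first and then derive (2) from it. For (1), I would realize the join topology as a subspace of a countable product. Form the Polish space $P = \prod_{i<\omega}(U,\cT_i)$ and let $\phi\colon U \to P$ be the diagonal map $\phi(x) = (x,x,\dots)$. Since $\phi^{-1}\bigl(\pi_i^{-1}(V)\bigr) = V$ for every $\cT_i$\nb-open $V$, the pullback along $\phi$ of the subspace topology on the image $\Delta := \phi(U)$ is generated precisely by $\bigcup_i \cT_i$; hence $\phi$ is a homeomorphism of $(U,\Angles{\bigcup_i\cT_i})$ onto $\Delta \subs P$. As a $G_\delta$ subspace of a Polish space is Polish (Alexandrov), it then suffices to show $\Delta$ is $G_\delta$ in $P$ — and I expect it to be in fact closed.

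The crux is proving $\Delta$ closed, and this is exactly where the Hausdorffness of the intersection topology $\cT_\cap := \bigcap_i\cT_i$ enters; I regard it as the main obstacle. Writing $\Delta = \bigcap_{i\ge 1}\bigl\{(x_k)_k \in P : x_0 = x_i\bigr\}$, I would reduce, via the continuous coordinate projections, to showing that the partial diagonal $D := \{(x,y) : x = y\}$ is closed in $(U,\cT_0)\X(U,\cT_i)$. Given $x \ne y$, Hausdorffness of $\cT_\cap$ supplies disjoint $\cT_\cap$\nb-open sets $V_x \ni x$ and $V_y \ni y$; as $\cT_\cap \subs \cT_0, \cT_i$ these are open in both factors, so $V_x \X V_y$ is a product-open neighbourhood of $(x,y)$ disjoint from $D$. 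Thus $D$ is closed, each set in the displayed intersection is a continuous preimage of $D$ hence closed, and $\Delta$, a countable intersection of closed sets, is closed. Then $\Delta$ is a closed — in particular Polish — subspace of $P$, proving (1).

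For (2), I would reduce to (1). First treat a single closed set $F \in \cF$: the topology $\cT_F := \Angles{\cT\cup\{F\}}$ makes $F$ clopen and coincides with the topology of the disjoint sum $F \oplus (U\setminus F)$, where $F$ carries its (closed, hence Polish) $\cT$\nb-subspace topology and $U\setminus F$ its (open, hence Polish) $\cT$\nb-subspace topology; since a finite topological sum of Polish spaces is Polish, $(U,\cT_F)$ is Polish. Enumerating $\cF = \{F_i\}_{i<\omega}$, the topologies $\cT_{F_i}$ are all Polish and clearly $\Angles{\bigcup_i\cT_{F_i}} = \Angles{\cT\cup\cF}$. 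Finally, each $\cT_{F_i} \sups \cT$, so $\bigcap_i\cT_{F_i} \sups \cT$ is Hausdorff; applying (1) to the family $(\cT_{F_i})_{i<\omega}$ yields that $\Angles{\cT\cup\cF}$ is Polish.
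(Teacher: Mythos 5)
Your proposal is correct and follows essentially the same route as the paper's own proof: realizing $\Angles{\bigcup_i\cT_i}$ as the diagonal of the countable product $\prod_i(U,\cT_i)$, closed by Hausdorffness of $\bigcap_i\cT_i$, and handling (2) by the direct-sum decomposition $F\oplus\comp F$ for a single closed set followed by an appeal to (1). You merely spell out the details (the reduction to partial diagonals, the verification that the sum topology coincides with $\Angles{\cT\cup\{F\}}$) that the paper leaves as ``easily'' checked.
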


\begin{proof}
1.\enskip{}Set $\cP = \prod_{i} ( U, \cT_{i} )$. Being a countable product of Polish spaces, \cP is Polish. 
 
Set $\cT_\omega = \Angles{\bigcup_{i} \cT_{i}}$. Easily, $(U, \cT_\omega)$ is homeomorphic to the diagonal $D$ of \cP, and the hypothesis ``\,$\bigcap_{i} \cT_{i}$ is Hausdorff\,'' entails that $D$ is closed in \cP, thus $D$ is a Polish subspace. Hence the conclusion.\smallskip{}

\noindent 2.\enskip{}For a closed $F \subs U$, set $O = \comp F$. Both $F$ and $O$ are Polish subspaces. The space $(U, \Angles{\cT \cup \{F\}})$ is homeomorphic to the direct sum $F \oplus O$, hence it is Polish. Now, for a countable collection of closed sets \cF, \Angles{\cT \cup \cF} is generated by $\bigcup_{F \in \cF}\Angles{\cT \cup \{F\}}$. Thus the conclusion, invoking (1).
\end{proof}

\begin{theorem}
The \Dtopology and the \SDtopology are both Polish.%
	\footnote{\,Dominique Lecomte has pointed to us that Louveau was first to show that the $\Delta$\nobreakdash-topology is Polish (with a fairly elaborate argument).}
\end{theorem}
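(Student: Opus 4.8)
The plan is to combine the Lemma with the structural fact peculiar to lightface pointclasses: there are only \emph{countably many} \DeltaOne subsets of \cN, and likewise only countably many \SigmaOne ones, each named by an integer code. Thus each of the two topologies is generated, over the standard Polish topology $\mathcal{T}_{\mathrm{std}}$ of \cN, by a \emph{countable} family of Borel sets, and one is tempted simply to quote the familiar principle that adjoining countably many Borel sets to a Polish topology keeps it Polish. The catch — and the reason the \Dtopology is the genuinely harder case — is that this principle is \emph{existential}: it produces \emph{some} finer Polish topology in which the chosen sets become clopen, whereas we must show that the \emph{specific} topologies \Sdelta and \SDtopology, with their prescribed bases, are themselves Polish. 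Everything hinges on clopenising the generators \emph{without overshooting} the intended topology.

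For the \Dtopology, write \cB for the countable Boolean algebra of all \DeltaOne sets, so that $\mathcal{S}_\Delta=\langle\mathcal{B}\rangle$; since the basic clopen sets of \cN are \DeltaOne, we have $\mathcal{T}_{\mathrm{std}}\subs\langle\mathcal{B}\rangle$. For each $D\in\mathcal{B}$ I would build a Polish topology $\mathcal{T}_D$ with $\mathcal{T}_{\mathrm{std}}\subs\mathcal{T}_D\subs\langle\mathcal{B}\rangle$ in which $D$ is clopen, \emph{adjoining only members of} \cB, by induction on the effective Borel rank of $D$. The base case, $D$ recursively open, is handled by applying Lemma~(2) to the closed set $\comp D$. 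For the step I invoke the effective Borel hierarchy: every \DeltaOne set is effectively $\Sigma^0_\xi$ for some $\xi<\wCK$, so — making $D$ clopen being the same as making $\comp D$ clopen — we may write $D=\bigcup_n D_n$ with the $D_n$ in \cB of strictly lower rank. Induction gives Polish $\mathcal{T}_{D_n}\subs\langle\mathcal{B}\rangle$ in which $D_n$ is clopen; Lemma~(1) makes their join Polish — its Hausdorff hypothesis is free, as every $\mathcal{T}_{D_n}$ refines $\mathcal{T}_{\mathrm{std}}$ — and in that join $\comp D=\bigcap_n\comp D_n$ is closed, so a last use of Lemma~(2) renders $D$ clopen. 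Since $\comp D$ and all $D_n$ lie in \cB, we keep $\mathcal{T}_D\subs\langle\mathcal{B}\rangle$. Joining all the $\mathcal{T}_D$ and applying Lemma~(1) once more gives a Polish topology which the inclusions $\mathcal{T}_{\mathrm{std}}\subs\mathcal{T}_D\subs\langle\mathcal{B}\rangle$ squeeze to equal $\langle\mathcal{B}\rangle=\mathcal{S}_\Delta$, now seen to be Polish.

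The \SDtopology then comes for free — which is the very point of keeping \Sdelta only as a prop. Its defining basis is the countable family \cF of \SigmaOne sets that are \Sdelta\nb-closed; so, working in the \emph{already Polish} space $(\cN,\mathcal{S}_\Delta)$, we are merely adjoining countably many \Sdelta\nb-closed sets, and Lemma~(2) applies verbatim to show $\langle\mathcal{S}_\Delta\cup\mathcal{F}\rangle$ Polish. This is exactly the \SDtopology: every \DeltaOne set is \SigmaOne and \Sdelta\nb-clopen, hence already belongs to \cF, so $\mathcal{S}_\Delta$ is subsumed and $\langle\mathcal{S}_\Delta\cup\mathcal{F}\rangle$ coincides with $\langle\mathcal{F}\rangle$.

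I expect the main obstacle to be precisely this exactness. A naive Borel clopenisation would adjoin closed sets that need not be \DeltaOne, producing a Polish topology strictly finer than \Sdelta; what rescues the argument — the ``adequate closure condition'' alluded to above — is that the effective Borel hierarchy lets one decompose each \DeltaOne set into \DeltaOne pieces of lower rank all the way down, so the construction never leaves the algebra \cB and its join lands on \Sdelta on the nose. Once that is secured, the \SDtopology costs nothing beyond the single appeal to Lemma~(2), its required closedness having been written into the definition.
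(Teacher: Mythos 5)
Your proof is correct and follows essentially the same route as the paper's: both arguments climb the effective Borel hierarchy up to $\wCK$ by transfinite induction on rank, clopenising at each stage via Lemma\,(2) and joining via Lemma\,(1), using the fact that every \DeltaOne set appears at some level $\xi<\wCK$, and both then obtain the \SDtopology from the already-Polish \Dtopology by one further application of Lemma\,(2). The only difference is organisational — you build one topology $\mathcal{T}_D$ per \DeltaOne set and join them all at the end, whereas the paper builds a single increasing chain $\mathcal{T}_\nu=\langle\bigcup_{\xi<\nu}(\Sigma^0_\xi\cup\Pi^0_\xi)\rangle$ — and your closing check that $\langle\mathcal{S}_\Delta\cup\mathcal{F}\rangle=\langle\mathcal{F}\rangle$ is a small point the paper leaves implicit.
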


\begin{proof}
For $\nu$, $1 \le \nu \le \wCK$, set $\cT_\nu = \Angles{\cB_\nu}$, where
\[
    	\cB_\nu = \MedBigcup_{\xi < \nu}(\Sigma^{0}_\xi \cup \Pi^{0}_\xi).
\]
We check, by induction on $\nu$, that the topology $\cT_{\nu}$ is Polish.

$\cT_{1}$ is the standard topology on \cN. For $\nu = \mu+1$, observe that $\cT_{\mu+1} = \Angles{\cT_\mu \cup \Sigma^{0}_\mu \cup \Pi^{0}_\mu}$, and $\Sigma^{0}_\mu \subs \cT_\mu$, hence $\cT_{\mu+1} = \Angles{\cT_{\mu} \cup \Pi^{0}_\mu}$ and $\Pi^{0}_\mu$ is a countable collection of $\cT_\mu$\nb-closed sets. Thus, by the lemma-(2), $\cT_{\mu+1}$ is Polish. Finally, for $\nu$ limit, easily $\cT_\nu = \Angles{\bigcup_{1 \le \xi < \nu}\cT_\xi}$, hence $\cT_\nu$ is Polish, by the lemma-(1).

Now $\DeltaOne = \cB_\wCK$, thus the \Dtopology $\Sdelta = \Angles{\cB_\wCK}$ is Polish.

As to the \SDtopology: it is generated over \Sdelta by a countable collection of closed sets (the~\Sdelta\nb-closed \SigmaOne sets), it is thus Polish, by the lemma-(2).
\end{proof}

\begin{notation*}
    \cS shall denote the \SDtopology on \cN, and $\cS_{2}$ the \SDtopology on $\cN^{2}$. $\cS_{2}$ is just a recursively homeomorphic copy of \cS. \;$\cS \X \cS$ denotes the usual product topology.
\end{notation*}

$\cS_{2}$ is strictly finer than $\cS \X \cS$, as the diagonal \Set{(x,x)}{x \in \cN} is open for $\cS_{2}$, and not for $\cS \X \cS$. Note that \SigmaOne sets being \emph{analytic} for \cS, $\cS \X \cS$ or $\cS_{2}$, as may be, have the Baire property there.
\medskip{}

\noindent\textbullet\enskip{}\emph{Save for explicit mention, topological terms henceforth are relative to the \SDtopology \cS, or to the product topology $\cS \X \cS$}.
\medskip{}

Let \LO and \WO{} denote the sets of reals coding linear and well-orders with field in $\omega$. For~$\alpha < \omega_{1}$, \WO{\alpha} and \WO{<\alpha} are self-explanatory. \LO is clopen. If $u \in \LO$, then for $k \in \omega$, $u \restr k$ codes $<_{u}$~restricted to \Set{n \in \omega}{n <_{u} k}, else $u \restr k$ codes $\O$. The function $(u,k) \mapsto u \restr k$ is continuous.

\begin{theorem}
\label{prop:Sigma-is-nonmeager}
\ \par
\begin{enumerate}
	\item The set $\cG = \Set{x \in \cN}{\omega_{1}^{x} = \wCK}$ is a dense $G_\delta$.
	\item Every nonempty \SigmaOne subset of \cN is nonmeager \textup(and identically for subsets of $\cN^{2}$ relative to~$\cS_{2}$\textup).
\end{enumerate}
\end{theorem}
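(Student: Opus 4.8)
The plan is to isolate first two facts about the interaction of \SigmaOne sets with the \Sdelta\nb-closure $\SdeltaCl{\cdot}$, and then feed them into the two assertions. \emph{(i) If $A$ is \SigmaOne, then $\SdeltaCl{A}$ is again \SigmaOne.} Here $x\notin\SdeltaCl{A}$ exactly when some \DeltaOne set containing $x$ misses $A$; writing this as $\exists d\,[\,d\text{ codes a \DeltaOne set }D_{d}\wedge x\in D_{d}\wedge D_{d}\cap A=\O\,]$, the bracketed matrix is \PiOne in $(d,x)$ — code\nb-correctness and disjointness from a \SigmaOne set are both \PiOne — and the number quantifier $\exists d$ keeps it \PiOne, so $\SdeltaCl{A}$ is \SigmaOne. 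Thus for nonempty \SigmaOne $A$ the set $\SdeltaCl{A}$ is a \emph{nonempty basic open set} of \cS. \emph{(ii) Every nonempty \SigmaOne set $A$ is \cS\nb-dense in $\SdeltaCl{A}$.} For if $U$ is a nonempty basic open set with $U\subs\SdeltaCl{A}$ and $U\cap A=\O$, then $A$ and $U$ are disjoint \SigmaOne sets, so \SigmaOne\nb-separation yields a \DeltaOne set $D\sups A$ with $D\cap U=\O$; but then $\SdeltaCl{A}\subs D$, whence $\O\ne U\subs\SdeltaCl{A}\subs D$ while $D\cap U=\O$ — absurd.

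For (1), density is the soft half: by the Gandy basis theorem every nonempty \SigmaOne set — in particular every nonempty basic open set — contains a point $x$ with $\omega_{1}^{x}=\wCK$, so \cG meets every nonempty \cS\nb-open set. For the $G_\delta$ clause I would prove the stronger statement that \cG is \Sdelta\nb-$G_\delta$. Using that $\omega_{1}^{x}>\wCK$ iff some $x$\nb-recursive well\nobreakdash-order has length $\ge\wCK$, write $\comp\cG=\MedBigcup_{e}A_{e}$ with $A_{e}=\Set{x}{\{e\}^{x}\in\WO{}\text{ and }|\{e\}^{x}|\ge\wCK}$. Each $A_{e}$ is \Sdelta\nb-closed, because its complement is a union of \DeltaOne sets: the set $\Set{x}{\{e\}^{x}\notin\LO}$ is arithmetical; the ill\nobreakdash-founded part $\Set{x}{\{e\}^{x}\in\LO\text{ and ill-founded}}$ is the union over $g\in\cN$ of the \DeltaOne sets $\Set{x}{\{e\}^{x}\in\LO\text{ and }g\text{ is descending in }\{e\}^{x}}$ (a \emph{fixed} descending sequence is an arithmetical condition); and the bounded part $\Set{x}{\{e\}^{x}\in\WO{},\ |\{e\}^{x}|<\wCK}$ is the union of the norm\nb-initial segments $\Set{x}{\{e\}^{x}\in\WO{},\ |\{e\}^{x}|\le\alpha_{m}}$ along a recursive sequence $\alpha_{m}\nearrow\wCK$, each \DeltaOne by the prewellordering property ($\SigmaOne$\nb-boundedness). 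Hence $\comp\cG$ is \Sdelta\nb-$F_\sigma$ and \cG is $G_\delta$.

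For (2) I would argue by contradiction. Suppose a nonempty \SigmaOne set $A$ were \cS\nb-meager, say $A\subs\MedBigcup_{n}F_{n}$ with each $F_{n}$ \cS\nb-closed and \cS\nb-nowhere dense; I would build a point of $A\setminus\MedBigcup_{n}F_{n}$. Keeping a decreasing sequence of nonempty \SigmaOne sets $A=A_{0}\sups A_{1}\sups\cdots$, at stage $n$ the set $\SdeltaCl{A_{n}}$ is a nonempty \cS\nb-open set by (i), so the dense open set $\comp F_{n+1}$ meets it in a nonempty basic open $U$; fact (ii) then forces $A_{n}\cap U\ne\O$, and I set $A_{n+1}\subs A_{n}\cap U\subs\comp F_{n+1}$, shrinking in standard diameter. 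Fact (ii) is exactly what removes the usual circular obstruction: it guarantees that a live \SigmaOne witness can always be pushed into the complement of the next nowhere dense set.

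The step I expect to be the genuine obstacle is ensuring that the fusion lands \emph{inside} $A$. Since \SigmaOne sets are not \cS\nb-closed, the limit point need not lie in $\bigcap_{n}A_{n}$ unless the construction simultaneously controls a projecting witness. I would handle this in the standard Gandy–Harrington manner — threading a shrinking Souslin witness through the $A_{n}$ (equivalently, invoking the strong Choquet property of the finer Gandy–Harrington topology) so that $\bigcap_{n}\SdeltaCl{A_{n}}$ is a singleton lying in $A_{0}=A$ — the non\nobreakdash-compactness of \cN being what makes this bookkeeping delicate. The assertion for $\cN^{2}$ and $\cS_{2}$ is then identical, $\cS_{2}$ being a recursive homeomorph of \cS.
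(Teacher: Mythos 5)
Your preparatory facts (i) and (ii) are both correct --- (i) is precisely the lemma the paper proves in \S\ref{sec:Naturality}, and your separation argument for (ii) is sound --- and the density half of (1) is the paper's own argument. The genuine failure is in the $G_\delta$ half of (1). The sets $A_{e}=\Set{x}{\{e\}^{x}\in\WO{}\text{ and }|\{e\}^{x}|\ge\wCK}$ are in general \emph{not} \Sdelta\nb-closed, indeed not even \cS\nb-closed, so the decomposition $\comp\cG=\bigcup_{e}A_{e}$ cannot witness that $\comp\cG$ is $F_\sigma$. The flaw in your justification is a lightface/boldface confusion: for a fixed $g\in\cN$, the set \Set{x}{g\text{ is descending in }\{e\}^{x}} is $\Delta^{1}_{1}(g)$, not lightface \DeltaOne, and \Sdelta is generated by \emph{lightface} sets only; a union of such boldface closed sets need not be \Sdelta\nb-open. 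Moreover, no repair of this step is possible: taking $e_{0}$ an index of the identity, $A_{e_{0}}=\WO{\ge\wCK}$, which is not Borel (else $\WO{}=\WO{<\wCK}\cup\WO{\ge\wCK}$ would be Borel, $\WO{<\wCK}$ being a countable union of \DeltaOne sets), whereas \emph{every} \Sdelta\nb-closed or \cS\nb-closed set is Borel, since there are only countably many \DeltaOne sets and countably many \SigmaOne sets, so open sets of either topology are countable unions of Borel sets. (A small further slip: there is no recursive sequence $\alpha_{m}\nearrow\wCK$, by admissibility of $\wCK$; this one is harmless, as you may union over all $\xi<\wCK$.) The device you are missing is the paper's use of norm \emph{exactly} $\wCK$ rather than norm at least $\wCK$: one has $\omega_{1}^{x}>\wCK$ iff $f_{k}(x)\in\WO{\wCK}$ for some total recursive $f_{k}$, and \WO{\wCK} \emph{is} \cS\nb-closed, because $u\in\WO{\wCK}$ iff $u\in\LO$, $u\notin\WO{<\wCK}$, and $u\restr k\in\WO{<\wCK}$ for all $k$ --- well-foundedness is forced by the initial-segment clause, so descending sequences never have to be discussed, and each clause is \cS\nb-clopen since \WO{<\wCK} is a \PiOne union of \DeltaOne sets.

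Concerning (2): your fusion argument is a genuinely different route from the paper's, and the step you yourself flag as the obstacle --- getting the limit point into $A$ --- is exactly the strong Choquet property of the Gandy-Harrington topology. Imported as a black box it would complete the proof, but it reinstates precisely the machinery this paper is designed to bypass, and as written the step is not carried out. The paper needs none of it, because its proof of (2) is a two-line corollary of (1): writing $x\in A\cap\cG \sEquiv x \in \cG \And F(x)\notin\WO{}$ with $F$ recursive, one observes that \emph{on \cG} the condition $F(x)\notin\WO{}$ coincides with the clopen condition $F(x)\notin\WO{<\wCK}$ (for $x\in\cG$, an $x$\nb-recursive well-order has norm below $\omega_{1}^{x}=\wCK$); hence $A\cap\cG=\cG\cap F^{-1}[\comp\WO{<\wCK}]$ is the intersection of the dense $G_\delta$ set \cG with a clopen set, nonempty by Gandy's theorem, hence nonmeager. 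This synergy between (1) and (2) is what your proposal misses: once (1) is proved correctly, no fusion is needed at all.
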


\begin{proof}
\noindent 1.\enskip{}By Gandy's basis theorem, every nonempty \SigmaOne set meets \cG. Basic open sets being \SigmaOne, \cG~is dense. Note that, for $\xi < \wCK$, \WO{\xi} is \DeltaOne thus:

\begin{itemize}[leftmargin=16pt, itemsep=3pt, label=\textbf{--}]
 \item \WO{<\wCK} is clopen, for it is a \PiOne union of \DeltaOne sets, and

\item \WO{\wCK} is closed, as it is an intersection of clopen sets:
\[
    	x \in \WO{\wCK} \LEquiv x \in \LO \And x \notin \WO{<\wCK} \And \forall k \in \omega(x \restr k \in \WO{<\wCK}).
\]
\end{itemize}

\noindent{}Let now $(f_{k})_{k < \omega}$ list the total recursive functions $\cN \to \cN$. The $f_{k}$'s are continuous, and 
\[
    	\cG = \Set{x \in \cN}{\forall k (f_{k}(x) \notin \WO{\wCK})} = \MedBigcap_{k} f_{k}^{-1}[\comp\WO{\wCK}].
\]
Hence \cG is a countable intersection of open sets: it is a $G_\delta$, and dense.
\smallskip{}

\noindent 2.\enskip{}Let $A$ be \SigmaOne nonempty. Set $A^{-} = A \cap \cG$, by Gandy's theorem $A^{-} \neq \O$. $A^{-}$ being \SigmaOne, let $F:\cN \to \cN$ be recursive, such that $x \in A^{-} \Equiv F(x) \notin WO$. Note that
\[
    	x \in A^{-} \LEquiv x \in \cG \And F(x) \notin \WO{<\wCK}\,,
\]
thus $A^{-}$ is a nonempty intersection of the dense $G_\delta$ set \cG with a clopen set: it is nonmeager.
\end{proof}

Observe that, whereas in the Gandy-Harrington topology \SigmaOne sets are open (and nonmeager), here nonempty \SigmaOne sets are merely nonmeager, which is strength enough for the applications. No Polish topology on \cN can include the class \SigmaOne (see the comments following \ref{def:Effective-Topology}).

\begin{proposition}
\label{prop:If-A-T-comeager}
If $A \subs \cN$ is comeager, $A^{2}$ is $\cS_{2}$\nb-comeager.

Similarly, if $A$ is comeager in an open set $U \subs \cN$, $A^{2}$ is $\cS_{2}$\nb-comeager in $U^{2}$.
\end{proposition}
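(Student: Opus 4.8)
The plan is to reduce the proposition to a statement about nowhere dense sets and then to exploit that the basis of $\cS_{2}$, like that of $\cS$, consists of \SigmaOne sets. Since $\comp(A^{2}) = (\comp A \X \cN) \cup (\cN \X \comp A)$ and $A$ comeager means $\comp A$ is $\cS$\nb-meager, by symmetry it suffices to prove that $M \X \cN$ is $\cS_{2}$\nb-meager whenever $M \subs \cN$ is $\cS$\nb-meager. Writing $M \subs \MedBigcup_{n} C_{n}$ with each $C_{n}$ being $\cS$\nb-closed and $\cS$\nb-nowhere dense, one has $M \X \cN \subs \MedBigcup_{n}(C_{n} \X \cN)$, so the whole matter reduces to the single claim: \emph{if $C \subs \cN$ is $\cS$\nb-closed and $\cS$\nb-nowhere dense, then $C \X \cN$ is $\cS_{2}$\nb-nowhere dense}.

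To establish this I would check two points. First, $C \X \cN$ is $\cS_{2}$\nb-closed: it is closed in $\cS \X \cS$, and since $\cS \X \cS \subs \cS_{2}$, every $(\cS \X \cS)$\nb-closed set is $\cS_{2}$\nb-closed. Second — and this is the crux — $C \X \cN$ has empty $\cS_{2}$\nb-interior. This is exactly where the \SigmaOne structure is used. Every nonempty $\cS_{2}$\nb-open set contains a nonempty basic set, hence a nonempty \SigmaOne subset of $\cN^{2}$; so it is enough to show that $C \X \cN$ contains no nonempty \SigmaOne set. Suppose $P \subs C \X \cN$ were \SigmaOne and nonempty. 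Its projection on the first coordinate, $\Set{x}{\exists y\, (x,y) \in P}$, is \SigmaOne (\SigmaOne being closed under projection), nonempty, and contained in $C$; but $C$ is $\cS$\nb-nowhere dense, so this projection is $\cS$\nb-meager, contradicting Theorem \ref{prop:Sigma-is-nonmeager}(2). Hence $C \X \cN$ contains no nonempty \SigmaOne set, a fortiori no nonempty $\cS_{2}$\nb-open set, so its interior is empty; being $\cS_{2}$\nb-closed, it is $\cS_{2}$\nb-nowhere dense.

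Assembling these, $M \X \cN$ and symmetrically $\cN \X M$ are $\cS_{2}$\nb-meager, so $A^{2}$ is $\cS_{2}$\nb-comeager. For the relative statement I would run the identical argument inside $U^{2}$: with $M = U \setminus A$ meager in $U$ and $C \subs U$ relatively closed and nowhere dense in $U$, the set $C \X U$ is $\cS_{2}$\nb-closed in $U^{2}$ and its first projection lands in $C$, again yielding the contradiction, using that a \SigmaOne set lying inside the open set $U$ is meager in the subspace iff it is globally meager. The one genuine subtlety — and the reason the proposition needs an argument at all — is that $\cS_{2}$ is strictly finer than $\cS \X \cS$, so one cannot simply cite the Kuratowski--Ulam theorem for the product; the passage from \cN to $\cN^{2}$ is instead carried out by \SigmaOne projection together with the nonmeagerness of nonempty \SigmaOne sets, Theorem \ref{prop:Sigma-is-nonmeager}(2).
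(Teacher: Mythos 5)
Your argument is correct and is essentially the paper's proof in dual form: the crux in both is that the first projection of a nonempty $\cS_{2}$\nb-basic open set is a nonempty \SigmaOne set, hence nonmeager by Theorem \ref{prop:Sigma-is-nonmeager}(2), which is exactly how the paper shows $A \X \cN$ is $\cS_{2}$\nb-dense after reducing $A$ to a dense $G_\delta$. You do the equivalent bookkeeping on the complement instead, covering $\comp A$ by closed nowhere dense sets $C$ and showing each $C \X \cN$ is $\cS_{2}$\nb-nowhere dense; the two formulations are interchangeable.
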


\begin{proof}
We may take $A$ to be a dense $G_\delta$. Easily, $A \X \cN$ is $\cS_{2}$\nb-$G_\delta$.
We now check that it is also $\cS_{2}$\nb-dense, hence $\cS_{2}$\nb-comeager.

Let $O \subs \cN^{2}$ be $\cS_{2}$\nb-basic\nb-open, nonempty. Its first projection $\pi_{1}[O]$ is \SigmaOne nonempty, hence nonmeager by Theorem\,\ref{prop:Sigma-is-nonmeager}\nb-(2). Consequently, $\pi_{1}[O] \cap A \neq \O$, i.e., $O \cap (A \X \cN) \neq \O$.

Symmetrically, $\cN \X A$ is $\cS_{2}$\nb-comeager. Hence $A^{2} = (A \X \cN) \cap (\cN \X A)$ is $\cS_{2}$\nb-comeager.
\end{proof}

\section{\texorpdfstring{\PiOne equivalence relations}{Π₁¹  equivalence relations}}
\label{sec:Pi-equivalence-relations}

The following result may well have started the descriptive set theory of equivalence relations.

\begin{theorem*}[Jan Mycielski]
Let $E$ be an equivalence relation on a Polish space $Z$. If $E$ is meager in~the product $Z \X Z$, then $E$ has a perfect set of mutually inequivalent elements.
\end{theorem*}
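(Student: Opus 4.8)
The plan is to build a Cantor scheme of open sets whose branches form the desired perfect set, using meagerness only through closed approximants. Fix a complete compatible metric on $Z$ and, since $E$ is meager, write $E \subseteq \bigcup_{n} F_{n}$ with each $F_{n}$ closed and nowhere dense in $Z \X Z$; I may take the $F_{n}$ increasing. Note first that $E$ contains the diagonal, so meagerness of $E$ forces $Z$ to have no isolated point (an isolated $p$ would make $\{(p,p)\}$ a nonempty open subset of $E$): hence every nonempty open set is splittable, which the construction needs.

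I would then construct, by induction on $n$, nonempty open sets $U_{s}$ indexed by $s \in 2^{<\omega}$ subject to: $\overline{U_{s\frown i}} \subseteq U_{s}$ with $U_{s}$ of diameter $\le 2^{-|s|}$; the siblings $\overline{U_{s\frown 0}}$ and $\overline{U_{s\frown 1}}$ disjoint; and the crucial separation clause that for any two \emph{distinct} strings $s,t$ of the same length $n$ the box $\overline{U_{s}} \X \overline{U_{t}}$ is disjoint from $F_{n}$. Granting such a scheme, each $x \in 2^{\omega}$ determines a unique $p_{x} \in \bigcap_{n} \overline{U_{x \restr n}}$, the map $x \mapsto p_{x}$ is a homeomorphism of $2^{\omega}$ onto a perfect set $P$, and if $x \ne y$ first split at level $n$ then $(p_{x},p_{y}) \in \overline{U_{x\restr m}} \X \overline{U_{y\restr m}}$ avoids $F_{m}$ for every $m \ge n$. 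Since the $F_{m}$ increase to a superset of $E$, a pair in $E$ would lie in some $F_{m_{0}}$, hence in $F_{m}$ for all $m \ge m_{0}$, contradicting avoidance at level $m = \max(n,m_{0})$. Thus $P$ consists of pairwise $E$-inequivalent points, as required.

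The one real obstacle is engineering the separation clause simultaneously for all the $\binom{2^{n+1}}{2}$ pairs of strings at the next level; a single point off $E$ is useless because $E$ is not closed, which is exactly the reason for replacing $E$ by the closed sets $F_{n}$. To pass from level $n$ to level $n+1$ I would work in the finite power $Z^{2^{n+1}}$, with coordinates indexed by the strings of length $n+1$, inside the open box $\prod_{|s|=n+1} U_{s \restr n}$. For each pair of distinct coordinates, the set of tuples whose two designated entries form an $F_{n+1}$-pair is the preimage of $F_{n+1}$ under a coordinate projection; since projections are open and continuous, this preimage is closed and nowhere dense, as is the set where two entries coincide. By the Baire category theorem the finite union of these bad sets misses a dense open subset of the box, so I may pick a tuple with all entries distinct and all designated pairs off $F_{n+1}$. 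Finally, because $F_{n+1}$ is closed, I can take the $U_{s}$ to be sufficiently small open neighbourhoods of these points, inside the appropriate $U_{s\restr n}$, with disjoint sibling closures and diameter $\le 2^{-(n+1)}$, so that the closure-boxes still avoid $F_{n+1}$; this closes the induction.
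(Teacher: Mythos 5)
Your proposal is correct, and it is essentially self-contained where the paper is not: the paper offers no proof of Mycielski's theorem at all, merely pointing to \cite[§5.3.1]{Gao} and \cite[§9.2]{Kechris-Martin}. What you have written is the standard Cantor-scheme argument that appears in those references, so you are supplying the omitted proof rather than diverging from the paper's method. The key points are all in place: covering $E$ by an increasing sequence of closed nowhere dense sets $F_{n}$ to compensate for $E$ not being closed; observing that $E \sups \{(p,p) : p \in Z\}$ forces $Z$ to be perfect, which is exactly what makes the diagonal (and hence the ``two entries coincide'' sets) nowhere dense; choosing the level-$(n+1)$ centers as a single tuple in $Z^{2^{n+1}}$ avoiding a finite union of closed nowhere dense preimages under the open coordinate projections; and shrinking to closed neighbourhoods using that $F_{n+1}$ is closed. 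The verification at the end (a pair in $E$ lies in all $F_{m}$ from some point on, while $(p_{x},p_{y})$ avoids $F_{m}$ for all $m$ past the splitting level) is exactly right. Two cosmetic remarks: since $F_{n+1}$ need not be symmetric you should either symmetrize it (replace $F_{n}$ by $F_{n} \cup F_{n}^{-1}$, still closed nowhere dense) or run the avoidance condition over ordered pairs of coordinates; and for a \emph{finite} union of nowhere dense sets you do not need the Baire category theorem --- the union is itself nowhere dense, so its complement is already dense in the open box. Neither affects correctness.
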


\begin{proof}
See \cite[§5.3.1]{Gao}, or \cite[§9.2]{Kechris-Martin} for a more general version of the theorem.
\end{proof}

Recall that, by the Kuratowski-Ulam theorem, an equivalence relation on a Polish space, having the Baire property, is meager in the product space if, and only if, all of its classes are meager.

\begin{lemma}
Let $E$ be a \PiOne equivalence relation on \cN, $C$ a nonmeager $E$\nb-class, and let $C^{\circ}$ be its~interior.
\begin{enumerate}
	\item $C^{\circ} \neq \O$.
	\item $C^{\circ}$ is a union of \DeltaOne sets.
\end{enumerate}
\end{lemma}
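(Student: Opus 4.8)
The plan is to prove both parts by exploiting the interplay between the \SDtopology and the Baire category results already established, together with the \PiOne structure of $E$. Since topological terms are relative to \cS, the class $C = [x_0]_E = \Set{y}{x_0 \mathbin E y}$ is a \PiOne set (for fixed $x_0$), and $C^\circ$ is its \cS\nb-interior.

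For part (1), I would argue by contradiction and a category computation. Suppose $C^\circ = \O$, i.e.\ $C$ is nowhere dense (it is closed, being \PiOne — wait, \PiOne sets need not be \cS\nb-closed, so more care is needed here; the relevant fact is that $C$ is assumed nonmeager). The key should be to turn the nonmeagerness of the single class $C$ into the statement that $C$ has nonempty interior. The natural tool is a Baire-category argument inside a well-chosen basic open set: since \cS is Polish, a set with the Baire property is comeager in some open set exactly when it is nonmeager somewhere. I would try to show that $C$, being a nonmeager \PiOne set, is in fact comeager in some basic \cS\nb-open set $U$, and then use the equivalence-relation structure — specifically that membership in $C$ is $E$\nb-invariant — to ``spread'' comeagerness and upgrade it to genuine interior. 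Concretely, if $C$ is comeager in $U$, then by Proposition\,\ref{prop:If-A-T-comeager} the saturation behaves well on squares; combined with the reflexivity/transitivity of $E$ one shows that $C \supseteq U$ up to a meager set, and then that the nonmeager \SigmaOne pieces forcing membership must actually be contained in $C$, yielding $C^\circ \neq \O$. The main obstacle, and the heart of the argument, will be this last upgrade from ``comeager in $U$'' to ``contains a nonempty open set'': this is precisely where one uses that nonempty \SigmaOne sets are nonmeager (Theorem\,\ref{prop:Sigma-is-nonmeager}\nb-(2)) together with the fact that the relevant witnessing set is \SigmaOne, so that it cannot be meager and hence must contribute to the interior.

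For part (2), the strategy is cleaner. Once (1) gives $C^\circ \neq \O$, I want to show $C^\circ$ is a union of \DeltaOne sets. The natural approach is to show that $C^\circ$ is \emph{both} \SigmaOne \emph{and} a union of basic \cS\nb-open sets, and to invoke the remark made just after the definition of the topology: a \PiOne (or here \SigmaOne) set that is a union of \DeltaOne sets is clopen, and dually every \cS\nb-open set is by definition a union of the basic clopen sets, which are \SigmaOne sets that are \Sdelta\nb-closed. Since $C^\circ$ is open in \cS, it is automatically a union of such basic sets; but I want the stronger conclusion that it is a union of \emph{\DeltaOne} sets. For this I would use that $C^\circ$ is \PiOne (being open in $C$, or by an invariance argument) combined with a reflection/boundedness argument: an open set that is also \PiOne can be written, via the \DeltaOne approximations to its \SigmaOne basic pieces, as a union of \DeltaOne sets. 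The expected obstacle here is verifying the complexity of $C^\circ$ — showing it is simultaneously \cS\nb-open and of low enough effective complexity (ideally \PiOne or \DeltaOne in its pieces) so that each point has a \DeltaOne neighborhood inside $C$; this likely uses the \PiOne\nb-ness of $C$ and an effective selection or Spector--Gandy style argument to produce the \DeltaOne witnesses around each point of $C^\circ$.
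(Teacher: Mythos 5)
Your plan circles the right ingredients --- comeagerness of $C$ in some basic open $U$, Proposition~\ref{prop:If-A-T-comeager}, and the nonmeagerness of nonempty \SigmaOne sets --- but in both parts the decisive step is missing or misstated. For (1), the argument is not an ``upgrade from comeager to open'' applied to $C$ itself; it is applied to the \emph{complement of $E$ inside $U^{2}$}. Once $C$ is comeager in a basic open $U$, Proposition~\ref{prop:If-A-T-comeager} makes $C^{2}$ $\cS_{2}$\nb-comeager in $U^{2}$; since $C$ is a single class, $C^{2} \subs E$, so the \SigmaOne set $U^{2} - E$ (here is where the \PiOne\nb-ness of $E$ enters: $\comp E$ is \SigmaOne) is disjoint from an $\cS_{2}$\nb-comeager subset of $U^{2}$ and is therefore \emph{meager}. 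Theorem~\ref{prop:Sigma-is-nonmeager}\nb-(2) then forces it to be \emph{empty}, whence $U^{2} \subs E$ and $U \subs C$. Your phrasing ``the relevant witnessing set is \SigmaOne, so that it cannot be meager and hence must contribute to the interior'' inverts this logic: the witnessing set is shown to \emph{be} meager, and the conclusion drawn is that it is empty. Without identifying $U^{2}-E$ and running this exact contrapositive, part (1) does not close.

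For (2) there is a second, independent gap. You take $C = [x_{0}]_{E}$ to be \PiOne ``for fixed $x_{0}$'', i.e.\ \PiOne in the parameter $x_{0}$; but the lemma asserts that $C^{\circ}$ is a union of \emph{lightface} \DeltaOne sets, and separation with a parameter would only yield $\Delta^{1}_{1}(x_{0})$ sets. The point is that part (1) supplies a nonempty lightface \SigmaOne (basic open) set $U \subs C$, and then $x \in C \sEquiv \forall y\,(y \in U \Implies y\,E\,x)$ exhibits $C$ as lightface \PiOne with no parameter. One then applies \SigmaOne separation to the disjoint \SigmaOne sets $U$ and $\comp C$ to get $D \in \DeltaOne$ with $U \subs D \subs C$; since $D$ is open, $D \subs C^{\circ}$, and ranging over all basic open $U \subs C^{\circ}$ gives $C^{\circ} = \bigcup\Set{D \in \DeltaOne}{D \subs C}$. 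No Spector--Gandy or effective selection argument is needed --- but the parameter-elimination step is essential and absent from your sketch.
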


\begin{proof}
1.\enskip{}Let $U$ be basic\nb-open nonempty such that $C$ is comeager in $U$. We claim that $U \subs C$. Using Proposition \ref{prop:If-A-T-comeager}, $C^{2}$ is $\cS_{2}$\nb-comeager in the open set $U^{2}$, and evidently $C^{2} \cap (U^{2} - E) = \O$. Thus, the \SigmaOne set $U^{2} - E$ is $\cS_{2}$-meager. By Theorem\,\ref{prop:Sigma-is-nonmeager}\nb-(2), it must be empty, that is $U^{2} \subs E$, and thus $U \subs C$.\looseness=-1

\smallskip{}\noindent 2.\enskip{}Let $U \subs C$ be basic\nb-open, nonempty. $C$ is \PiOne, for $U$ is \SigmaOne and
\[
    	x \in C \LEquiv \forall y \,(y \in U \Implies y\,E\,x).
\]
\SigmaOne separation yields a \DeltaOne set $D$ such that $U \subs D \subs C$. Since $D$ is open, $D \subs C^{\circ}$. Consequently, $C^{\circ} = \bigcup \Set{D \in \DeltaOne}{D \subs C}$.
\end{proof}

\begin{remark*}
    The argument in (1) extends to: \emph{if $C$ is nonmeager in an open $U \subs \cN$, then $(C \cap U)^{\circ} \neq \O$}.
\end{remark*}

\begin{theorem}[Silver's Dichotomy theorem]
    Let $E$ be a \PiOne equivalence relation on \cN. Either $\cN/E$ is countable or $E$ has a perfect set of mutually inequivalent elements.
\end{theorem}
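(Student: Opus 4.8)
The plan is to follow the Harrington--Louveau strategy, but carried out entirely inside the Polish space $(\cN,\cS)$: split off a countable ``smooth'' part and apply Mycielski's theorem to the rest. First I would let $U$ be the union of all $E$\nb-small basic open sets, where a set $B$ is \emph{$E$\nb-small} if $B \X B \subs E$ (so $B$ lies in a single class). As there are only countably many \SigmaOne sets, $U$ is a countable union of $E$\nb-small sets and hence meets at most countably many $E$\nb-classes; moreover $U = \bigcup_{C} C^{\circ}$, the union of the $\cS$\nb-interiors of all classes, since by the preceding lemma every basic open subset of a class sits in that class's interior. Put $F = \comp U$, a closed — hence Polish — subspace. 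If $\cN/E$ is countable we are in the first alternative, so assume it is uncountable; then, as $U$ meets only countably many classes, uncountably many classes are contained in $F$, whence $F$ is uncountable and its perfect kernel $P$ is a nonempty perfect closed subspace, on which I intend to run Mycielski.

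The crux is to show that $E$ restricted to $F$ is $(\cS \X \cS)$\nb-meager in $F \X F$. By the Kuratowski--Ulam theorem on the Polish space $F$ it suffices to prove that every $E$\nb-class is meager in $F$. Suppose not: some class $C$ is nonmeager in $F$, hence — $C$ being analytic and so having the Baire property — comeager in a nonempty relatively open set $W = V \cap F$. I would then reprise the argument of the preceding lemma's part~(1), relativized to $F$: push ``$C$ comeager in $W$'' up to ``$C \X C$ comeager in $W \X W$'' via Proposition~\ref{prop:If-A-T-comeager}, so that the \SigmaOne set $(W \X W) \setminus E$ is meager, and then invoke the nonmeagerness of nonempty \SigmaOne sets (Theorem~\ref{prop:Sigma-is-nonmeager}\nb-(2)) to force $W \X W \subs E$. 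This makes $W$ a nonempty $E$\nb-small set meeting $F$; but any such set contains an $E$\nb-small basic open set, which lies in $U$ — contradicting $W \subs F = \comp U$.

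The hard part will be precisely this relativization. Theorem~\ref{prop:Sigma-is-nonmeager}\nb-(2) and Proposition~\ref{prop:If-A-T-comeager} are stated for the ambient space, whereas the argument needs the corresponding category facts \emph{inside the closed subspace $F$}, and meagerness in a closed subspace is genuinely subtler than ambient meagerness. The obstacle is that $F$ is only \PiOne and need not be \Sdelta\nb-open, so ``nonempty \SigmaOne is nonmeager'' does not transfer to $F$ for free; the honest route is to re-establish the relevant nonmeagerness directly inside $F$, controlling relative category through the dense $G_\delta$ set $\cG$ of Theorem~\ref{prop:Sigma-is-nonmeager}\nb-(1), on which all the nonmeagerness is concentrated.

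Granting the crux, Mycielski's theorem applied to the perfect Polish space $P$ — where $E$ is now meager in the product — yields a continuous injection $f \colon 2^{\omega} \to P$ with pairwise $E$\nb-inequivalent values. The last step is to transfer this Cantor set back to the standard topology: since $\cS$ refines the standard topology, $f$ is \emph{a fortiori} continuous for the standard topology, and a continuous injection of the compact space $2^{\omega}$ into the Hausdorff space $\cN$ is a homeomorphism onto its image, so $f[2^{\omega}]$ is a standard perfect set. Mutual $E$\nb-inequivalence is a topology\nb-free condition, so this is exactly the required perfect set of mutually inequivalent elements.
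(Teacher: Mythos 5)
Your overall architecture matches the paper's: your $U$ is exactly its $H=\bigcup\Set{C^{\circ}}{C\in\cN/E}$, your $F$ is its $Z$, and the endgame (Mycielski on the non\nb-smooth part, then pulling the Cantor set back to the standard topology) is the same. But the step you yourself flag as ``the hard part'' is a genuine gap, and it is precisely where the paper does something you have not: it proves that $H$ is \emph{clopen}, not merely open, so that $Z=\comp H$ is an \emph{open} (indeed clopen) Polish subspace and no relativization to a closed subspace is ever needed. Concretely, by part~(2) of the preceding lemma each nonmeager class has $C^{\circ}$ equal to a union of \DeltaOne sets (and meager classes have empty interior), so $H$ is a union of \DeltaOne sets, hence \cS\nb-open; moreover $x\in H\sEquiv\exists D\in\DeltaOne\,(x\in D\And D^{2}\subs E)$, which via the usual coding of \DeltaOne sets shows $H$ is \PiOne; and a \PiOne union of \DeltaOne sets is \cS\nb-clopen. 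With $Z$ open, ``all $E_{Z}$\nb-classes have empty interior in $Z$, hence are meager'' follows from the Remark after the lemma, and Kuratowski--Ulam finishes the meagerness of $E_{Z}$ with no relative-category subtleties at all.

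Your proposed repair --- redoing the category analysis by hand inside the closed set $F$, ``controlling relative category through $\cG$'' --- should not be expected to go through as stated. The engine of the whole argument is Theorem~\ref{prop:Sigma-is-nonmeager}\nb-(2) applied to \emph{projections of basic open sets}, which are \SigmaOne; once you pass to relatively open subsets of $F^{2}$, you are projecting intersections of \SigmaOne sets with the \PiOne set $F$, and these projections leave \SigmaOne altogether, so the nonmeagerness theorem no longer applies (likewise Proposition~\ref{prop:If-A-T-comeager} is proved only for open $U\subs\cN$). The fix is not to fight the relativization but to eliminate it by showing $F$ is open. The rest of your write\nb-up --- the identification $U=\bigcup_{C}C^{\circ}$, the count of classes meeting $U$, the application of Mycielski (the detour through the perfect kernel is unnecessary: once $E_{Z}$ is meager in $Z^{2}$, the diagonal forces $Z$ to have no isolated points), and the transfer of the Cantor set to the standard topology --- is correct and agrees with the paper.
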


\begin{proof}
We decompose \cN into two clopen subspaces $\cN = H \oplus Z$ such that $E_H$ has countably many classes, and $E_Z$ is meager, where $E_X$ denotes $E \cap (X \X X)$.

Set $H = \bigcup \Set{C^{\circ}}{C \in \cN/E}$, and $Z = \comp H$. 

We check that $H$ is clopen: by the above lemma, it is a union of \DeltaOne sets. It suffices thus to verify that it is \PiOne, indeed,
\[
    x \in H \LEquiv \exists D \in \DeltaOne(x \in D \And D^{2} \subs E).
\]
This yields a \PiOne definition of $H$, through the usual coding of \DeltaOne sets.

\noindent\;\textbf{--}\enskip{}$E_H$ \emph{has countably many classes}, evidently.

\noindent\;\textbf{--}\enskip{}$E_Z$ \emph{is meager}. Indeed, in the clopen subspace $Z$ (if nonempty) all $E_Z$\nb-classes have empty interior, hence are meager, by the last remark. Thus the conclusion, by the Kuratowski-Ulam theorem.

Now if $Z \neq \O$, then by Mycielski's theorem applied to $E_Z$ there is a perfect set $P \subs Z$ of mutually inequivalent reals. $P$ contains a copy of the Cantor space $2^\omega$, which is perfect for the standard topology.
\end{proof}

It is interesting to rephrase the crux of the above argument as a decomposition theorem.

\begin{theorem}
    Given a \PiOne equivalence relation $E$, \cN decomposes into two clopen subspaces $\cN = H \oplus Z$ such that $E_H$ is clopen in $H$, and $E_Z$ is meager in $Z$.
\end{theorem}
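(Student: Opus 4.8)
The plan is to recycle the construction from the proof of Silver's theorem and to isolate the single assertion that is genuinely new. I set $H = \bigcup \Set{C^{\circ}}{C \in \cN/E}$ and $Z = \comp H$, exactly as there. We already know that $H$ is clopen, whence $Z$ is clopen and $\cN = H \oplus Z$; and the verification that $E_Z$ is meager in $Z$ is verbatim the one given for Silver's theorem — every $E_Z$\nb-class has empty interior in $Z$, hence is meager, and the Kuratowski-Ulam theorem applies. So the only point left to establish is that $E_H$ is clopen in $H \X H$.

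The structural observation that drives everything is that the $E_H$\nb-classes are precisely the interiors $C^{\circ}$ of the nonmeager classes $C$. Indeed, a point $x$ of a class $C$ lies in $H$ if and only if it lies in the interior of its \emph{own} class, so $C \cap H = C^{\circ}$; and by the preceding lemma $C^{\circ} \neq \O$ exactly when $C$ is nonmeager. Thus $H$ is partitioned by the family $\Set{C^{\circ}}{C \text{ nonmeager}}$ of nonempty open sets, and each member is in fact clopen in $H$, its complement in $H$ being the union of the remaining class\nb-interiors, which is again open. (By the lemma each $C^{\circ}$ is even a union of \DeltaOne sets, so these clopen classes are as concrete as one could wish.) I expect this bookkeeping — the identity $C \cap H = C^{\circ}$ and the resulting partition of $H$ into clopen pieces — to be the only delicate part of the argument.

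With this in hand the conclusion is immediate, since both $E_H$ and its complement inside $H \X H$ are unions of open boxes. Taking the unions over nonmeager classes, one has $E_H = \bigcup_{C}(C^{\circ} \X C^{\circ})$, which is $\cS \X \cS$\nb-open, while its complement in $H \X H$ is $\bigcup_{C \neq D}(C^{\circ} \X D^{\circ})$, equally a union of open boxes and hence open. Therefore $E_H$ is clopen in $H \X H$ for the product topology $\cS \X \cS$, and a fortiori for the finer topology $\cS_{2}$. No descriptive\nb-set\nb-theoretic input beyond the earlier lemma (and the \SigmaOne separation already used to prove it) is needed.
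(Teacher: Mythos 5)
Your proposal is correct and follows essentially the same route as the paper: the same $H$ and $Z$, and the same key identities $E_H = \bigcup_C (C^{\circ} \X C^{\circ})$ and $H^{2} - E_H = \bigcup_{C \neq D}(C^{\circ} \X D^{\circ})$, exhibiting both sets as unions of open boxes. The supporting observation $C \cap H = C^{\circ}$ is accurate and just makes explicit what the paper leaves implicit.
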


\begin{proof}
With $H$ and $Z$ defined as above, it remains to check that $E_H$ is clopen in $H$. This is immediate for, setting $\cI = \Set{C^{\circ}}{C \in \cN/E}$, one has 
\vspace*{-2pt}\jot=2pt 
\begin{align*}
		    	E_H 		\; &= \; \MedBigcup \Set{X \X X}{X \in \cI},\\
		H^{2} - E_H 	\; &= \; \MedBigcup \Set{X \X Y}{X,Y \in \cI \And X \neq Y}.
\end{align*}

\vspace*{-4pt}
\noindent{}Both $E_H$ and its complement in $H^{2}$ are unions of open sets.
\end{proof}

As has been observed, Harrington's argument yields fine effective consequences beyond Silver's dichotomy result, e.g., \emph{in a \PiOne equivalence relation with countably many classes all equivalence classes are \PiOne} — and clopen for the \SDtopology \cS.

\section{\texorpdfstring{Characterizing the \SDtopology}{Characterizing the Σ∆-topology}}
\label{sec:Naturality}

We proceed now to show that the \SDtopology \cS, far from being an \emph{ad-hoc} construction, is the largest Polish topology with a basis included in \SigmaOne — indeed, the largest such regular topology. We~propose here a fairly relaxed notion of effective topology.

\begin{definition}
\label{def:Effective-Topology}
    A topology on \cN is said to be an effective topology if it has a basis included in \SigmaOne.
\end{definition}

Examples of natural effective topologies abound. In this paper, apart from the standard topology, the Gandy-Harrington topology, the \Dtopology and the \SDtopology are all effective. On the other hand, we know that if $\cT \sups \cO$ is a Polish topology on \cN, where \cO is the standard topology, the identity map $(\cN, \cT) \to (\cN, \cO)$ being continuous, all \cT\nb-open sets are Borel for \cO. Consequently, no Polish topology on \cN can include all the \SigmaOne sets.
\smallskip{}

For $A \subs \cN$, \SdeltaCl{A} shall denote the \Sdelta\nb-closure of $A$.

\begin{lemma}
    For all $A \in \SigmaOne$, \SdeltaCl{A} is \SigmaOne, hence \SdeltaCl{A} is \cS\nb-open \emph(as well as \cS\nb-closed, evidently\emph).
\end{lemma}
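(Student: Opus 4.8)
The plan is to show that the complement $\comp\SdeltaCl{A}$ is \PiOne; then \SdeltaCl{A} is \SigmaOne and the remaining assertions are immediate. First I would unwind the closure. By the characterization of \Sdelta\nb-closed sets recorded in the Definition, the \Sdelta\nb-closure of $A$ is the smallest \Sdelta\nb-closed superset, namely $\SdeltaCl{A} = \MedBigcap \Set{D \in \DeltaOne}{A \subs D}$. Passing to complements (and using that $\comp D$ runs over the \DeltaOne sets contained in $\comp A$ as $D$ runs over those containing $A$),
\[
    \comp\SdeltaCl{A} \;=\; \MedBigcup \Set{D \in \DeltaOne}{D \subs \comp A} \;=\; \Set{x \in \cN}{\exists D \in \DeltaOne\,(x \in D \And D \cap A = \O)},
\]
which is precisely the \Sdelta\nb-interior of the \PiOne set $\comp A$.

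The heart of the matter is to check that this set is \PiOne, and this is exactly the coding device already used above to define the set $H$ in the Silver decomposition. I would fix the usual parametrization of the (countably many) \DeltaOne subsets of \cN by integer codes $n$, with \PiOne code-set and a \SigmaOne and a \PiOne membership relation that agree, on genuine codes, to define $D_{n}$. Then ``$n$ codes a \DeltaOne set'' is \PiOne; ``$x \in D_{n}$'' is \PiOne on codes; and ``$D_{n} \cap A = \O$'' is \PiOne, since writing it as $\forall y\,\neg(y \in D_{n} \And y \in A)$ via the \SigmaOne form of membership makes the matrix the negation of a conjunction of two \SigmaOne conditions. Hence
\[
    \comp\SdeltaCl{A} \;=\; \Set{x \in \cN}{\exists n\,\bigl(n \text{ codes a } \DeltaOne \text{ set } D_{n} \And x \in D_{n} \And D_{n} \cap A = \O\bigr)},
\]
an existential \emph{integer} quantifier over a \PiOne matrix, hence \PiOne. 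The only real obstacle is this bookkeeping: one must quantify over integer codes, which is legitimate because there are only countably many \DeltaOne sets, whereas a quantifier over real codes would push the complexity up to $\Sigma^{1}_{2}$ rather than keeping us within \PiOne — so the finiteness of the quantifier type is doing essential work.

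Finally I would assemble the conclusions. From the above, \SdeltaCl{A} is \SigmaOne; being a \Sdelta\nb-closure it is \Sdelta\nb-closed, so it is a \SigmaOne set that is \Sdelta\nb-closed, i.e.\ a basic \cS\nb-open set, whence \cS\nb-open. It is also \cS\nb-closed, since \cS refines \Sdelta and so every \Sdelta\nb-closed set is \cS\nb-closed. This yields both parts of the lemma.
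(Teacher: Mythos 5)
Your proof is correct and takes essentially the same route as the paper's: both rest on identifying \SdeltaCl{A} with the intersection of the \DeltaOne supersets of $A$ and then invoking the usual integer coding of \DeltaOne sets, the only difference being that the paper computes $x \in \SdeltaCl{A}$ directly as a universal number quantifier over a \SigmaOne matrix, while you dualize and show the complement is \PiOne via an existential number quantifier over a \PiOne matrix. The coding device and the complexity bookkeeping (including your correct observation that the quantifier over codes must be an integer quantifier) are the same in either form.
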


\begin{proof}
Let $A \in \SigmaOne$. $x \in \SdeltaCl{A}$ if, and only if, every \DeltaOne set containing $x$ meets $A$, i.e., % CORRECTION '$A$'
\[
    	x \in \SdeltaCl{A} \LEquiv \forall D \in \DeltaOne(x \in D \Implies \exists y \in D \cap A).
\]
Using the usual coding of \DeltaOne sets, \SdeltaCl{A} is seen to be \SigmaOne. By definition of the \SDtopology, $\SdeltaCl{A} \in \cS$.
\end{proof}

\begin{theorem}
    	If \cT is a regular effective topology on \cN, then $\cT \subs \cS$.
\end{theorem}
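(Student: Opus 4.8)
The goal is to show that any regular effective topology $\cT$ is contained in $\cS$, i.e.\ every $\cT$-open set is $\cS$-open. Since $\cT$ has a basis of \SigmaOne sets, it suffices to prove that every basic $\cT$-open set $B \in \SigmaOne$ is $\cS$-open. The natural plan is to show that such a $B$ coincides with its \Sdelta-closure, or more precisely that around each of its points one can inscribe an $\cS$-open set contained in $B$. The preceding lemma is the key tool: for any $A \in \SigmaOne$, the set \SdeltaCl{A} is again \SigmaOne and hence $\cS$-open (and $\cS$-closed). So \SdeltaCl{\,\cdot\,} is a device that turns \SigmaOne sets into $\cS$-open sets, and the whole argument should be engineered to route through it.

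First I would fix a point $x \in B$ and exploit regularity of $\cT$ at $x$: there is a basic $\cT$-open set $B' \in \SigmaOne$ with $x \in B'$ whose $\cT$-closure is contained in $B$, i.e.\ $x \in B' \subs \thinoverline{B'}^{\cT} \subs B$, where the bar denotes the $\cT$-closure. The plan is then to compare the $\cT$-closure of $B'$ with its \Sdelta-closure. Since $\cT$ is an effective topology its basic open sets are \SigmaOne, hence analytic, so the \Dtopology is coarser than $\cT$ in the relevant sense; concretely, every \DeltaOne set is both \SigmaOne and \PiOne, and a \PiOne set that is a union of \DeltaOne sets is $\cS$-clopen, which should let me control how $\cT$-closures sit relative to \Sdelta-closures. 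The crux is to establish the inclusion $\SdeltaCl{B'} \subs \thinoverline{B'}^{\cT}$: intuitively, a coarser closure operator produces a larger closure, so I want to argue that the \Sdelta-closure, being taken in a topology no finer than $\cT$, cannot escape the $\cT$-closure. Granting this, I obtain
\[
    x \in B' \subs \SdeltaCl{B'} \subs \thinoverline{B'}^{\cT} \subs B,
\]
and since \SdeltaCl{B'} is $\cS$-open by the lemma, $x$ is an $\cS$-interior point of $B$. As $x \in B$ was arbitrary, $B$ is $\cS$-open, and since the $B$'s form a basis for $\cT$, we conclude $\cT \subs \cS$.

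The step I expect to be the main obstacle is precisely the inclusion $\SdeltaCl{B'} \subs \thinoverline{B'}^{\cT}$, i.e.\ showing the \Sdelta-closure is no larger than the $\cT$-closure. This is where regularity and the effectivity of $\cT$ must genuinely interact, and it is not a formal monotonicity statement because $\cT$ and \Sdelta need not be comparable as topologies in general. The honest way to run it is to show that the $\cT$-closure of any \SigmaOne set is \Sdelta-closed: if $y \notin \thinoverline{B'}^{\cT}$, then by regularity there is a basic $\cT$-open $\cO \in \SigmaOne$ separating $y$ from $\thinoverline{B'}^{\cT}$, and I would need to manufacture from $\cO$ a \DeltaOne set isolating $y$ from $B'$, thereby certifying $y \notin \SdeltaCl{B'}$. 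Producing that \DeltaOne separator — presumably via \SigmaOne separation applied to the two disjoint \SigmaOne sets $\cO$ and $B'$ (using that regularity makes them separable by closed neighborhoods) — is the technical heart, and getting the definability bookkeeping right so the separating set is genuinely \DeltaOne is the part that will require care rather than routine manipulation.
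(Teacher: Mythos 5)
Your proposal is correct and follows essentially the same route as the paper: regularity produces a basic \SigmaOne set $B'$ sandwiched (via its closure) inside the given open set, and \SigmaOne separation together with the lemma that \SdeltaCl{B'} is \cS\nb-open finishes the argument. The step you flag as the main obstacle, $\SdeltaCl{B'} \subs \thinoverline{B'}^{\cT}$, goes through exactly as you sketch and needs no delicate bookkeeping: for $y$ outside the $\cT$\nb-closure, a basic \SigmaOne neighborhood of $y$ disjoint from $B'$ yields, by \SigmaOne separation, a \DeltaOne set containing that neighborhood and missing $B'$, which witnesses $y \notin \SdeltaCl{B'}$ (the paper packages the same idea slightly differently, intersecting countably many \DeltaOne separators to build one \Sdelta\nb-closed set between $U_x$ and $O$).
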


\begin{proof}
Let $\cB \subs \SigmaOne$ be a basis for \cT. Fix $O \in \cT$, and set $F = \comp O$.

For $x \in O$, by regularity of the topology \cT there are $U_{x} \in \cB$, and $V \in \cT$ such that $x \in U_{x}$, $V \sups F$, and $U_{x} \cap V = \O$. Say $V = \bigcup_{i} V_{i}$, where $V_{i} \in \cB$.

For every $i < \omega$, since $U_{x}$ and $V_{i}$ are \SigmaOne and disjoint, \SigmaOne separation yields $D_{i} \in \DeltaOne$ such that $D_{i} \sups U_{x}$ and $D_{i} \cap V_{i} = \O$. Set $C_{U_{x}} = \bigcap_{i} D_{i}$, $C_{U_{x}}$ is \Sdelta\nb-closed, and $x \in U_{x} \subs C_{U_{x}} \subs O$. Hence $x \in \SdeltaCl{U_{x}} \subs O$. Since $U_{x}$ is \SigmaOne then, by the previous lemma, \SdeltaCl{U_{x}} is \cS\nb-open. $x \in O$ being arbitrary, $O$ is \cS\nb-open. We have shown $\cT \subs \cS$.
\end{proof}

One natural question comes to mind in relation to the \SDtopology: \emph{are there interesting metrics compatible with \cS}?

%%%%%%%%% BIBLIOGRAPHY %%%%%%%%%

\bibliographystyle{amsalpha}

\end{document}